\newcommand{\boundr}[1]{\ensuremath{\mathrm{Bd}}(#1)}
\def \cl {\operatorname{Cl}}
\def \N{\mathbb{N}}
\def \R{\mathbb{R}}
\def \Z{\mathbb{Z}}
\newcommand{\rvec}{\mathbb{R}_{\mathrm{Vec}}}
\newcommand{\dimcm}{\dim_{\mathrm{CM}}}
\def \cl {\mathrm{Cl}}
\newtheorem{Th}{Theorem}[section]
\newtheorem{Thm}[Th]{Theorem}
\newtheorem{Fact}[Th]{Fact}
\newtheorem{Lem}[Th]{Lemma}
\newtheorem*{Lem*}{Lemma}
\begin{document}
\title{Coarse dimension and definable sets in expansions of the ordered real vector space}

\author{Erik Walsberg}
\address{Department of Mathematics, Statistics, and Computer Science\\
Department of Mathematics\\University of California, Irvine, 340 Rowland Hall (Bldg.\# 400),
Irvine, CA 92697-3875}
\email{ewalsber@uci.edu}
\urladdr{http://www.math.illinois.edu/\textasciitilde erikw}

\date{\today}

\maketitle

\begin{abstract}
Let $E \subseteq \R$.
Suppose there is an $s > 0$ such that
$$ | \{ k \in \mathbb{Z}, -m \leq k \leq m - 1 : [k,k+1] \cap E \neq \emptyset \} | \geq m^s $$
for all sufficiently large $m \in \N$.
Then there is an $n \in \N$ and a linear $T : \R^n \to \R$ such that $T(E^n)$ is dense.
It follows that if $E$ is in addition nowhere dense then $(\R,<,+,0,(x \mapsto \lambda x)_{\lambda \in \R}, E)$ defines every bounded Borel subset of every $\R^n$.
\end{abstract}

\section{Introduction}
\noindent
Let $X \subseteq \R^n$ be bounded and $Z \subseteq \R^n$.
Given a positive $\delta \in \R$ we let $\mathscr{M}(\delta,X)$ be the minimum number of open $\delta$-balls required to cover $X$.
Equivalently $\mathscr{M}(\delta,X)$ is the minimal cardinality of a subset $S$ of $X$ such that every $x \in X$ lies within distance $\delta$ of some element of $S$.
Let $B_n(p,r)$ be the open ball in $\R^n$ with center $p$ and radius $r > 0$ and let $B_n(r) = B_n(0,r)$.
We define the \textbf{coarse Minkowski dimension} of $Z$ to be
$$ \dimcm(Z) := \limsup_{r \to \infty} \frac{\mathscr{M}(1, B_n(r) \cap Z )}{\log(r)}. $$
It is easy to see that the coarse Minkowski dimension of $Z$ is bounded above by $n$ and the coarse Minkowski dimension of a bounded set is zero.
An application of the first claim of Fact~\ref{fact:metric0} below shows that replacing one with any fixed real number $\delta > 0$ does not change the coarse Minkowski dimension.
A simple computation shows that $\dimcm(Z)$ is the infimum of the set of positive $s \in \R$ such that $\mathscr{M}(1,B_n(r) \cap Z) < r^s$ for all sufficiently large $r > 0$.

\medskip \noindent 
We define
$$ \mathscr{N}(X) := \left| \left\{ (k_1,\ldots,k_n) \in \Z^n : X \cap \prod_{i = 1}^{n} [k_i, k_i + 1] \neq \emptyset   \right\} \right|.  $$
It is well-known and easy to see that there is a real number $K > 0$ depending only $n$ such that
$$ K^{-1} \mathscr{M}(1,X) \leq \mathscr{N}(X) \leq K \mathscr{M}(1,X). $$
So
$$ \dimcm(Z) = \limsup_{r \to \infty} \frac{\mathscr{N}(B_n(r) \cap Z )}{\log(r)}. $$
Our main geometric result is Theorem~\ref{thm:wedge}.

\begin{Thm}
\label{thm:wedge}
Suppose $E \subseteq \R$.
If $\dimcm(E) > 0$ then $T(E^n)$ is dense for some $n \in \N$ and linear $T : \R^n \to \R$.
Equivalently, if there is a positive $s \in \R$ such that $\mathscr{N}(B_1(r) \cap E) \geq r^s$ for all sufficiently large $r \in \R$ then there exist $n \in \N$ and linear $T : \R^n \to \R$ such that $T(E^n)$ is dense.
\end{Thm}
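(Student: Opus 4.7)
The plan is a Fubini and second-moment (Paley--Zygmund) argument over a parameter of coefficients. Fix $n \in \N$ with $ns > 1$ (the threshold at which the polynomial density overwhelms the linear compression $\R^n \to \R$), parameterize candidate linear maps by $\alpha \in K := [1,2]^n$ via $T_\alpha(x) := \sum_{i=1}^n \alpha_i x_i$, and fix a target window $I = (a - \epsilon, a + \epsilon)$ for $a, \epsilon$ rational, $\epsilon > 0$.

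The first substantive step is a counting bound. For each large $R$ the hypothesis produces at least $R^{ns}$ tuples $e \in E^n \cap [-R,R]^n$: enumerate the $\geq R^s$ integer unit intervals in $[-R,R]$ met by $E$, choose a representative in each, and take the $n$-fold product. Let $N_{\alpha,R}$ count these $e$ with $T_\alpha(e) \in I$. A Fubini first-moment estimate sees each nonzero $e$ contribute a slab in $K$ of Lebesgue measure at least $c|I|/\|e\|_\infty \geq c|I|/R$, giving $\int_K N_{\alpha,R}\, d\alpha \geq c|I| R^{ns - 1} \to \infty$ as $R \to \infty$. A companion second-moment bound $\int_K N_{\alpha,R}^2\, d\alpha \leq C\bigl(|I| R^{ns-1}\bigr)^2$ should follow by estimating pairwise slab intersections via linear-independence considerations. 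Applying Paley--Zygmund uniformly on each sub-ball $B \subseteq K$ then shows that $V_I := \{\alpha \in K : T_\alpha(E^n) \cap I \neq \emptyset\}$ meets every ball of $K$ in positive measure. Moreover $V_I$ is manifestly open in $K$, being a union of open slabs indexed by $e \in E^n$; together these give $V_I$ open and dense in $K$.

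Finally, Baire category on $K$ applied to the countable family $\{V_I\}$ over rational $a,\epsilon$ produces a residual set of $\alpha \in K$ for which $T_\alpha(E^n)$ meets every rational open interval, i.e.\ for which $T_\alpha(E^n)$ is dense in $\R$. Any such $\alpha$ witnesses the conclusion.

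The main technical obstacle I expect is the second-moment estimate. The bilinear sum over pairs $(e,e') \in (E^n \cap [-R,R]^n)^2$ is dominated by \emph{near-parallel} pairs, whose slab intersections in $K$ have effective codimension one rather than two and so contribute disproportionately. Controlling these requires either restricting attention to tuples $e$ in suitable general position and showing these still carry the bulk of the first moment, or separating the sum and bounding the near-parallel contribution using refined geometric inputs derived from the polynomial density hypothesis $\mathscr{N}(B_1(r) \cap E) \geq r^s$. This is where the genuine geometric content of the theorem lies, and where the counting hypothesis must be leveraged beyond a crude pigeonhole.
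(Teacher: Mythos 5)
Your proposal has a genuine gap, and it is not one you can expect to patch: the second-moment bound is false in general, and in fact the very first move --- fixing $n$ with $ns>1$ and searching for coefficients $\alpha$ making $T_\alpha(E^n)$ dense --- is already doomed for that $n$. Take $s=3/4$, $n=2$, and let $E=\bigcup_{j}\pm\bigl([2^j,\,2^j+2^{4j/5}]\cap\Z\bigr)$, blocks of consecutive integers. Then $\mathscr{N}(B_1(r)\cap E)\geq r^{3/4}$ for all large $r$, yet no linear $T:\R^2\to\R$ has $T(E^2)$ dense: writing $T(x,y)=x+\gamma y$ (the degenerate cases are trivial), if $|\gamma|$ is a power of $2$ the image lies in a discrete subgroup, and otherwise $\bigl|2^j-|\gamma|2^{j'}\bigr|\geq c_\gamma\max(2^j,|\gamma|2^{j'})$ for all $j,j'$, so the perturbations of size at most $2^{4j/5}+|\gamma|2^{4j'/5}$ coming from within the blocks cannot bring more than finitely many block pairs into any fixed bounded window; hence $T(E^2)$ is locally finite. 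Consequently no $\alpha$ gives a dense image, so by Baire some rational window $I$ has $V_I$ non-dense, and your scheme must break --- and it breaks exactly where you predicted: at scale $R\approx 2^j$ the bulk of the $R^{ns}$ representative tuples lie in a single block, all their slabs point within angle about $R^{-1/5}$ of one direction, and the pairwise intersection terms make $\int_K N_{\alpha,R}^2\,d\alpha$ grow strictly faster than $\bigl(\int_K N_{\alpha,R}\,d\alpha\bigr)^2$, so Paley--Zygmund yields a vanishing lower bound. Nor can "general position" rescue it: the well-separated tuples are too few to carry the first moment in this example. (A smaller issue: with $\alpha\in[1,2]^n$ and $E\subseteq[0,\infty)$ the image can never be dense in $\R$; you need mixed signs, i.e.\ differences, which is easily repaired but is symptomatic of where the real mechanism lies.)

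The deeper point is that no choice of $n$ depending only on $s$ can be fixed in advance by such an argument, and the paper never attempts this: its proof gives no control on $n$. The paper argues by contradiction with a self-improvement scheme. Assuming no $T(E^n)$ is ever dense, it considers the family $\mathcal{S}$ of all sets $T(E^n)$, lets $s$ be the supremum of their coarse Minkowski dimensions (at most $1$), picks $F\in\mathcal{S}$ with $\dimcm(F)>s/2$, and applies the wedge dichotomy of Lemma~\ref{lem:wedge3}: either some linear image of $F^4$ is dense (contradicting the assumption, since that image again has the form $T(E^m)$), or some orthogonal projection restricts to a quasi-isometric embedding of $F^2$ into $\R$, in which case Lemma~\ref{lem:cmqi} and Fact~\ref{fact:metric1} give $\dimcm T(F^2)=2\dimcm(F)>s$ with $T(F^2)\in\mathcal{S}$, contradicting the supremum. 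No moment estimates or measure-theoretic projection theorems appear; the only metric inputs are that coarse dimension is a quasi-isometry invariant and doubles under squaring. In the example above the necessity of iteration is visible concretely: $E-E$ contains $\Z$, so $n=4$ works although $n=2$ does not. Any salvage of your Fubini/energy approach would need $n$ to depend on the multi-scale structure of $E$ rather than on $s$ alone, which is precisely what the paper's iteration supplies.
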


\noindent
The converse implication to Theorem~\ref{thm:wedge} does not hold.
Let $D = \{ 2^n, 2^n + n : n \in \N \}$.
A simple computation shows that $D$ has coarse Minkowski dimension zero.
Let $S : \R^4 \to \R$ be given by $S(x_1,x_2,x_3,x_4) = (x_1 - x_2) + \alpha (x_3 - x_4)$ for a fixed $\alpha \in \R \setminus \mathbb{Q}$.
Then $S(D^4)$ is dense.

\medskip\noindent 
Theorem~\ref{thm:wedge} is motivated by an application to logic that we now describe.
Let $\rvec$ be the ordered vector space $(\R,<,+,0,( x \mapsto \lambda x)_{\lambda \in \R})$ of real numbers.
For any subset $E$ of $\R$ let $(\rvec,E)$ be the expansion of $\rvec$ by a unary predicate defining $E$.
When we say that a subset of $\R^n$ is definable in a first order expansion of $(\R,<,+,0)$ such as $(\rvec,E)$ we mean that it is first order definable possibly with parameters from $\R$.

\medskip \noindent
Hieronymi and Tychonievich~\cite{HT} show that $(\rvec,\mathbb{Z})$ defines all bounded Borel subsets of all $\R^n$.
In contrast, it follows from \cite{ivp,weis} that every subset of $\R^n$ definable in $(\R,<,+,0,\Z)$ is a finite union of locally closed sets.

\medskip \noindent 
The theorem of Hieronymi and Tychonievich is a special case of Theorem~\ref{thm:FM}.
Theorem~\ref{thm:FM} also follows from a more general theorem of Fornasiero, Hieronymi, and Walsberg \cite[Theorem 7.3, Corollary 7.5]{FHW-Compact}.
We let $\cl(E)$ be the closure of $E\subseteq \R$ and $\boundr E$ be the boundary of $E$. 
Recall that the boundary of a subset of $\R$ is always closed.

\begin{Thm}\label{thm:FM}
Suppose that $E \subseteq \R$ is not dense and co-dense in any nonempty open interval.
Then the following are equivalent:
\begin{enumerate}
    \item $(\rvec,E)$ does not define every bounded Borel subset of every $\R^n$,
    \item Every subset of $\R$ definable in $(\rvec,E)$ either has interior or is nowhere dense,
    \item $T(\boundr E^n)$ is nowhere dense for every linear $T : \R^n \to \R$.
\end{enumerate}
\end{Thm}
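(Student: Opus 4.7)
The implication (2) $\Rightarrow$ (1) is immediate: the bounded Borel set $\mathbb{Q} \cap [0,1]$ is neither nowhere dense nor has interior, so under (2) it cannot be definable in $(\rvec, E)$, yielding (1).

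For the contrapositive of (1) $\Rightarrow$ (3), I suppose some $T(\boundr E^n)$ is somewhere dense, say dense in an open interval $I$. The blanket hypothesis on $E$ forces $\boundr E$ to be closed and nowhere dense (if $\boundr E$ contained an open interval $J$, then both $E$ and $\R\setminus E$ would be dense in $J$), so $\boundr E^n$ is nowhere dense in $\R^n$. The plan is to extract from $T(\boundr E^n)$ a definable subset of some interval that is both dense and co-dense there: if $T(\boundr E^n) \cap I$ has empty interior in $I$, this set is already dense and co-dense in $I$; otherwise I pass to the topological frontier of $T(\boundr E^n)$ in $I$, using that the nowhere-denseness of $\boundr E^n$ together with $T$ having image containing an interval forces pathological topology on this frontier. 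Given such a dense-and-co-dense definable subset of an interval, the classical results in the spirit of Hieronymi--Tychonievich~\cite{HT} imply that $(\rvec, E)$ defines every bounded Borel subset of every $\R^n$, contradicting (1).

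The principal direction is (3) $\Rightarrow$ (2), where Theorem~\ref{thm:wedge} enters. Assume (3). Since ``nowhere dense'' implies ``not dense in $\R$'', no linear image $T(\boundr E^n)$ is dense in $\R$, and the contrapositive of Theorem~\ref{thm:wedge} applied to $\boundr E$ yields $\dimcm(\boundr E) = 0$. Together with $\boundr E$ closed and nowhere dense, this is precisely the kind of geometric input consumed by the structural theorem of Fornasiero--Hieronymi--Walsberg~\cite[Theorem 7.3, Corollary 7.5]{FHW-Compact}, which concludes that every subset of $\R$ definable in $(\rvec, E)$ either has interior or is nowhere dense.

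The main obstacle is (3) $\Rightarrow$ (2): arbitrary definable subsets of $\R$ in $(\rvec, E)$ are built from $E$ by unbounded alternations of quantifiers, projections, and affine operations, so their topology is a priori uncontrolled, while hypothesis (3) restricts only a very special family of definable sets (linear images of powers of $\boundr E$). Theorem~\ref{thm:wedge} supplies the decisive combinatorial-to-geometric bridge — a coarse growth bound on $\boundr E$ follows from the linear-image hypothesis (3) — which then allows the model-theoretic framework of \cite{FHW-Compact} to control all definable subsets of $\R$.
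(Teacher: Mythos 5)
Your treatment of $(2)\Rightarrow(1)$ is fine, but the two substantive directions both have genuine gaps, and in the principal one you have inverted the paper's logical architecture. For $(3)\Rightarrow(2)$ you pass from hypothesis (3) to the weaker statement $\dimcm(\boundr E)=0$ (via the contrapositive of Theorem~\ref{thm:wedge}) and then claim that coarse dimension zero, together with $\boundr E$ being closed and nowhere dense, is "precisely the geometric input" for a tameness theorem. This is false, and the paper itself exhibits the counterexample: $D=\{2^n,\,2^n+n : n\in\N\}$ is closed, nowhere dense, and has coarse Minkowski dimension zero, yet $S(D^4)$ is dense (and countable) for $S(x_1,x_2,x_3,x_4)=(x_1-x_2)+\alpha(x_3-x_4)$, so $(\rvec,D)$ defines a set that neither has interior nor is nowhere dense and condition (2) fails. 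In other words, the converse of Theorem~\ref{thm:wedge} does not hold, so replacing (3) by "$\dimcm(\boundr E)=0$" discards exactly the information that (3) carries. The actual proof of $(3)\Rightarrow(2)$ (as the paper indicates) applies the Friedman--Miller sparse-set theorem directly, using the full hypothesis that \emph{every} linear image $T(\boundr E^n)$ is nowhere dense; Theorem~\ref{thm:wedge} plays no role in proving Theorem~\ref{thm:FM} at all --- it is combined with Theorem~\ref{thm:FM} afterwards to deduce Theorem~\ref{thm:main}.

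For $(1)\Rightarrow(3)$ your final bridge is also false as stated: having a definable subset of an interval that is dense and co-dense there does \emph{not} imply that all bounded Borel sets are definable over $\rvec$ --- $(\rvec,\mathbb{Q})$ defines the dense and co-dense set $\mathbb{Q}$, yet by Block-Gorman--Hieronymi--Kaplan its closed definable sets are all $\rvec$-definable. What the Hieronymi--Tychonievich theorem requires is a definable \emph{nowhere dense} set whose image under a definable map is somewhere dense, and that is available here directly: $\boundr E$ is nowhere dense and definable in $(\rvec,E)$, $T$ is linear hence definable, and $T(\boundr E^n)$ is somewhere dense by assumption. Your detour through extracting a dense-and-co-dense set is therefore unnecessary, and its second branch does not work anyway: if, say, $\boundr E$ is a Cantor set $C$ with $C+C=[0,2]$, then $T(\boundr E^2)$ for $T(x,y)=x+y$ contains an interval and its frontier inside that interval is empty, so no "pathological" definable set is produced by that construction. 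The paper's own treatment is simply a correct citation of these results (Friedman--Miller for $(3)\Rightarrow(2)$, the main theorem of \cite{HT} for $(1)\Rightarrow(3)$, or the general theorem of \cite{FHW-Compact}); your proposal, while structured the same way, misstates what both cited results deliver, and the misstatements are refuted by examples appearing in the paper itself.
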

\noindent The implication $(3) \Rightarrow (2)$ is a corollary of a result of Friedman and Miller~\cite{FM-Sparse}.
The implication $(1) \Rightarrow (3)$ is a corollary of the main theorem of \cite{HT}.
Note that $\boundr E$ is nowhere dense as $E$ is not dense and co-dense in any open interval.
If $E$ is bounded then $(3)$ above is equivalent to a natural geometric condition on $E$.
This equivalence, observed in \cite[Theorem 7.3]{FHW-Compact}, is an easy consequence of the famous Marstrand projection theorem (see Mattila~\cite[Chapter 9]{Mattila}) and the classical theorem of Steinhaus that $Z - Z := \{ z - z' : z,z' \in Z \}$ has interior whenever $Z \subseteq \R^n$ has positive $n$-dimensional Lebesgue measure.

\begin{Fact}\label{fact:hasudorff}
Suppose $F \subseteq \R$ is bounded.
Then $T(F^n)$ is nowhere dense for every linear $T : \R^n \to \R$ if and only if $\cl(F)^n$ has Hausdorff dimension zero for all $n \in \N$.
\end{Fact}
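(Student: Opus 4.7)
The plan is to prove both directions of the equivalence. The easier direction $(\Leftarrow)$ is to show that if $\cl(F)^n$ has Hausdorff dimension zero for every $n \in \N$, then every linear image $T(F^n) \subseteq \R$ is nowhere dense. Since $T$ is Lipschitz, it does not increase Hausdorff dimension, so $T(\cl(F)^n)$ has Hausdorff dimension zero and in particular empty interior. Because $F$ is bounded, $\cl(F)^n$ is compact and $T(\cl(F)^n)$ is closed. Thus $T(F^n) \subseteq T(\cl(F)^n)$ is contained in a closed set with empty interior, and so is nowhere dense.

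For the converse $(\Rightarrow)$, I argue by contrapositive: assume $\dim_H \cl(F)^n = s > 0$ for some $n$, and produce an $m$ and a linear $T' : \R^m \to \R$ for which $T'(F^m)$ is not nowhere dense. The first step is to boost the dimension. The classical product inequality $\dim_H(A \times B) \geq \dim_H A + \dim_H B$, applied iteratively to the $k$-fold self-product of $\cl(F)^n$, gives $\dim_H \cl(F)^{nk} \geq ks$, which exceeds $1$ for $k$ large. Set $N = nk$. Next I invoke Marstrand's projection theorem: for almost every line $V$ through the origin in $\R^N$, the orthogonal projection of $\cl(F)^N$ onto $V$ has positive $1$-dimensional Lebesgue measure. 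Since every nonzero linear $T : \R^N \to \R$ is a scalar multiple of an orthogonal projection onto a line, this produces a linear $T$ such that $T(\cl(F)^N)$ has positive Lebesgue measure. By Steinhaus's theorem, $T(\cl(F)^N) - T(\cl(F)^N)$ contains a neighborhood of $0$. Writing this difference set as $T'(\cl(F)^{2N})$ for the linear map $T'(x,y) := T(x) - T(y)$, I obtain a linear functional on $\R^{2N}$ whose image of $\cl(F)^{2N}$ has nonempty interior. Continuity of $T'$ and density of $F$ in $\cl(F)$ then force $T'(F^{2N})$ to be dense in $T'(\cl(F)^{2N})$, so $T'(F^{2N})$ is not nowhere dense, contradicting the hypothesis.

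The main obstacle is essentially bookkeeping: verifying that the product inequality for Hausdorff dimension applies in the iterated form needed, and matching Marstrand's statement about orthogonal projections onto lines with the framework of arbitrary linear functionals $\R^N \to \R$ (handled by normalizing to a unit direction vector). Everything else is a direct assembly of Marstrand and Steinhaus, exactly as the introduction of the paper suggests.
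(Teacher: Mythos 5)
Your argument is correct, and it is essentially the proof the paper has in mind: the paper states this Fact without a detailed proof, attributing it to \cite{FHW-Compact} and noting it follows from the Marstrand projection theorem together with the Steinhaus theorem, which is exactly the route you take (dimension boost via the product inequality, Marstrand to get a projection of positive measure, Steinhaus plus the difference map to get interior, density of $F^{m}$ in $\cl(F)^{m}$ to conclude). The converse direction via Lipschitz images of the compact set $\cl(F)^n$ is the standard easy half and is also fine.
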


\noindent Fact~\ref{fact:hasudorff} does not hold for unbounded subsets of $\R$.
The set of integers, like any countable set, has Hausdorff dimension zero, and $T(\Z^2)$ is dense for any linear $T : \R^2 \to \R$ of the form $T(x,y) = x + \alpha y$ with $\alpha \in \R \setminus \mathbb{Q}$.
Combining Theorem~\ref{thm:wedge} and Theorem~\ref{thm:FM} we obtain the following.

\begin{Thm}\label{thm:main}
Suppose $E \subseteq \R$ is not dense and co-dense in any nonempty open interval.
If $\boundr{E}$ has positive coarse Minkowski dimension then $(\rvec,E)$ defines every bounded Borel subset of every $\R^n$.
In particular if $E$ is nowhere dense and has positive coarse Minkowski dimension then $(\rvec,E)$ defines every bounded Borel subset of every $\R^n$.
\end{Thm}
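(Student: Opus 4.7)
The plan is to derive Theorem~\ref{thm:main} by combining Theorem~\ref{thm:wedge} with the contrapositive of the implication $(1) \Rightarrow (3)$ in Theorem~\ref{thm:FM}; essentially all of the substance is already in place.

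First I would check that $E$ satisfies the hypothesis of Theorem~\ref{thm:FM}. For the main statement this is given by assumption; for the ``in particular'' clause it follows from $E$ being nowhere dense, since such a set is not dense in any nonempty open interval and so is certainly not simultaneously dense and co-dense in any such interval.

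Next I would produce a witness to the failure of condition (3). When $E$ is nowhere dense, $\cl(E)$ has empty interior, so $\boundr{E} = \cl(E) \supseteq E$; monotonicity of $\mathscr{N}$ under inclusion then gives $\dimcm(\boundr{E}) \geq \dimcm(E) > 0$. In either case $\dimcm(\boundr{E}) > 0$, so Theorem~\ref{thm:wedge} applied to $\boundr{E}$ yields $n \in \N$ and a linear $T : \R^n \to \R$ with $T((\boundr{E})^n)$ dense in $\R$. Density rules out being nowhere dense, so condition (3) of Theorem~\ref{thm:FM} fails, and the equivalence of the three conditions forces condition (1) to fail as well. This is exactly the statement that $(\rvec,E)$ defines every bounded Borel subset of every $\R^n$.

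No real obstacle appears in this deduction: the technical content lives entirely in Theorem~\ref{thm:wedge} (and the cited Theorem~\ref{thm:FM}), and what remains is only the bookkeeping of matching hypotheses and invoking the two earlier theorems in the right order.
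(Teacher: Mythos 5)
Your deduction is correct and is exactly the paper's intended argument: the paper obtains Theorem~\ref{thm:main} by applying Theorem~\ref{thm:wedge} to $\boundr{E}$ and then invoking the equivalence $(1) \Leftrightarrow (3)$ of Theorem~\ref{thm:FM}, with the ``in particular'' clause handled via $\boundr{E} = \cl(E) \supseteq E$ for nowhere dense $E$, just as you do. Nothing is missing.
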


\noindent
Note that $\Z$ has coarse Minkowski dimension one so Theorem~\ref{thm:main} generalizes the result of Hieronymi and Tychonievich described above.
There are subsets $E$ of $\R$ with coarse Minkowski dimension zero such that $(\rvec,E)$ defines every bounded Borel subset of every $\R^n$ such as $\{ 2^n, 2^n + n : n \in \N\}$ (see the comment after Theorem~\ref{thm:wedge}).
Theorem~\ref{thm:main} fails without the assumption that $E$ is not dense and co-dense in any nonempty open interval.
Block-Gorman, Hieronymi, and Kaplan~\cite{GoHi-Pairs} show that every closed subset of $\R^n$ definable in $(\rvec,\mathbb{Q})$ is already definable in $\rvec$ and $\boundr{\mathbb{Q}} = \R$ has coarse Minkowski dimension one.

\medskip \noindent
The present paper is part of the broader study of the metric geometry of definable sets in first order structures expanding $(\R,<,+,0)$, see \cite{FHM, FHW-Compact, HM}.
Fornsiero, Hieronymi, and Miller~\cite{FHM} show that if $E \subseteq \R$ is nowhere dense and has positive Minkowski dimension then $(\R,<,+,\cdot,0,1,E)$ defines every Borel subset of every $\R^n$.
This statement fails over $\rvec$, as $D = \{ \frac{1}{n} : n \in \N , n \geq 1\}$ has Minkowski dimension one and Fact~\ref{fact:hasudorff} and Theorem~\ref{thm:FM} together imply that every subset of $\R$ definable in $(\rvec,D)$ either has interior or is nowhere dense.
It is shown in \cite{FHW-Compact} that if $E \subseteq \R^n$ is closed and the topological dimension of $E$ is strictly less than the Hausdorff dimension of $E$ then $(\rvec,E)$ defines every bounded Borel subset of every $\R^n$.

\medskip \noindent As a closed subset of $\R$ has topological dimension zero if it is nowhere dense and topological dimension one if it has interior, Theorem~\ref{thm:main} shows that if $E \subseteq \R$ is closed and the topological dimension of $E$ is strictly less than the coarse Minkowski dimension of $E$ then $(\rvec,E)$ defines every bounded Borel subset of every $\R^n$.
It is natural to conjecture that if $E \subseteq \R^n$ is closed and the topological dimension of $E$ is strictly less then the coarse Minkowski dimension of $E$ then $(\rvec,E)$ defines every bounded Borel subset of every $\R^n$.
In Theorem~\ref{thm:special} we will show as a corollary to Theorem~\ref{thm:main} that if $Z \subseteq \R^n$ is closed and has topological dimension zero and positive coarse Minkowksi dimension then $(\rvec,Z)$ defines every bounded Borel subset of $\R^n$.

\subsection*{Acknowledgements}
\noindent I thank the referee for many improvements and Philipp Hieronymi for useful discussions.

\section{Metric Notions}
\noindent
We recall two useful facts about $\mathscr{M}(\delta,X)$ and $\mathscr{N}(X)$, both of which are easy to see.
One can find more information about these invariants in Yomdin and Comte~\cite[Chapter 2]{YomdinComte} and many other places.

\begin{Fact}\label{fact:metric0}
Let $n \in \N$.
There are $K,L > 0$ such that for all bounded $X,Y \subseteq \R^n$ and $0 < \delta < \delta'$
$$ \mathscr{M}(\delta',X) \leq \mathscr{M}(\delta,X) \leq K \left( \frac{\delta'}{\delta} \right)^n \mathscr{M}(\delta',X) $$
and
$$ L^{-1} \mathscr{M}(\delta,X) \mathscr{M}(\delta,Y) \leq  \mathscr{M}(\delta, X \times Y) \leq L  \mathscr{M}(\delta, X) \mathscr{M}(\delta, Y)  $$
In particular
$$ L^{-1} \mathscr{M}(\delta,X)^2 \leq \mathscr{M}(\delta, X^2) \leq L \mathscr{M}(\delta,X)^2 $$
for all bounded $X \subseteq \R^n$.
\end{Fact}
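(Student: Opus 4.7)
The plan is to deduce every inequality in Fact~\ref{fact:metric0} from two standard ingredients. The first ingredient is a volume bound: a maximal $\delta$-separated subset of an open Euclidean ball of radius $\delta'$ has cardinality at most $K(\delta'/\delta)^n$, because the open $\delta/2$-balls centered at such a separated set are pairwise disjoint and contained in a ball of radius $\delta' + \delta/2$, so a standard Lebesgue volume comparison gives the bound with a constant $K$ depending only on $n$. The second ingredient is the packing number $\mathscr{P}(\delta, Z)$, the maximum cardinality of a $\delta$-separated subset of $Z$; I would record the standard two-sided comparison $\mathscr{M}(\delta, Z) \le \mathscr{P}(\delta, Z) \le K'' \mathscr{M}(\delta, Z)$, where the left inequality holds because any maximal $\delta$-separated set is automatically $\delta$-dense, and the right inequality holds because each ball of a $\delta$-covering contains at most $K''$ elements of a $\delta$-separated set (again by a volume argument, with $K''$ depending only on $n$).

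With these in hand the first display is immediate. The inequality $\mathscr{M}(\delta',X) \le \mathscr{M}(\delta,X)$ is just the observation that any cover by open $\delta$-balls is also a cover by open $\delta'$-balls. For the other direction, cover $X$ by $\mathscr{M}(\delta',X)$ open balls of radius $\delta'$, and apply the volume bound inside each to refine it to a cover by at most $K(\delta'/\delta)^n$ open balls of radius $\delta$.

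For the product inequalities, I would first switch from the Euclidean norm to the sup norm on $\R^n \times \R^n$; the two norms are equivalent with constants depending only on $n$, so by the first part of the fact this changes the covering numbers only by a bounded multiplicative factor. Under the sup norm, $B(x,\delta) \times B(y,\delta)$ is exactly an open $\delta$-ball centered at $(x,y)$. The upper bound $\mathscr{M}(\delta, X \times Y) \le L \mathscr{M}(\delta, X) \mathscr{M}(\delta, Y)$ then follows by taking products of optimal $\delta$-covers of $X$ and $Y$. For the lower bound, I would pass to packing numbers: if $\{x_i\}$ is a maximal $\delta$-separated subset of $X$ and $\{y_j\}$ is a maximal $\delta$-separated subset of $Y$, then $\{(x_i,y_j)\}$ is $\delta$-separated in $X \times Y$ under the sup norm, giving $\mathscr{P}(\delta, X \times Y) \ge \mathscr{P}(\delta, X) \mathscr{P}(\delta, Y)$, and converting back to $\mathscr{M}$ via the packing/covering comparison gives $\mathscr{M}(\delta, X \times Y) \ge L^{-1} \mathscr{M}(\delta, X) \mathscr{M}(\delta, Y)$. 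The "in particular" clause is the specialization $X = Y$. The only mildly substantive step is the volume packing bound underlying both ingredients, and this is entirely routine; there is no real obstacle in the proof.
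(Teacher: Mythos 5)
Your argument is correct. Note that the paper offers no proof of Fact~\ref{fact:metric0} at all: it is stated as ``easy to see'' with a pointer to Yomdin--Comte, so there is nothing to compare against beyond the standard folklore argument, which is exactly what you give. Your two ingredients (the volume bound on $\delta$-separated subsets of a $\delta'$-ball, and the two-sided comparison between covering and packing numbers) do yield all three displays as you describe; the only points worth making explicit in a write-up are that the constants for the product inequalities come from the ambient space $\R^{2n}$ rather than $\R^n$ (still depending only on $n$), and that the passage between the Euclidean and sup norms uses the first display with $\delta' = \sqrt{2}\,\delta$, which is where the hypothesis $\delta < \delta'$ of that display is invoked.
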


\medskip \noindent The proof of the fact below is a straightforward computation that is essentially the same as the proof of the analogous fact for Minkowski dimension.
We leave the proof to the reader.

\begin{Fact}\label{fact:metric1}
For any $X \subseteq \R^n, Y \subseteq \R^m$ and $k \in \N$ we have
$$ \dimcm( X \times Y) \leq \dimcm(X) + \dimcm(Y) $$
and
$$ \dimcm(X^k) = k\dimcm(X).$$
\end{Fact}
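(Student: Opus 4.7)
The plan is to derive both claims directly from the product estimate in Fact~\ref{fact:metric0}. For the subadditivity inequality, fix $X \subseteq \R^n$ and $Y \subseteq \R^m$ and observe
\[ B_{n+m}(r) \cap (X \times Y) \;\subseteq\; (B_n(r) \cap X) \times (B_m(r) \cap Y). \]
Combining this containment with the upper half of Fact~\ref{fact:metric0} yields
\[ \mathscr{M}(1, B_{n+m}(r) \cap (X \times Y)) \;\leq\; L \cdot \mathscr{M}(1, B_n(r) \cap X) \cdot \mathscr{M}(1, B_m(r) \cap Y) \]
for a constant $L$ depending only on $n+m$. Taking logarithms, dividing by $\log r$, and invoking subadditivity of $\limsup$ (legitimate since $\dimcm(Z) \leq n$ guarantees finiteness) gives $\dimcm(X \times Y) \leq \dimcm(X) + \dimcm(Y)$.

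For the equality $\dimcm(X^k) = k\dimcm(X)$, the inequality $\leq$ is immediate from the first part by induction on $k$. For the reverse inequality, I would iterate the lower half of Fact~\ref{fact:metric0}, applied $k-1$ times to the bounded set $A = B_n(r) \cap X$, to obtain
\[ L^{1-k}\, \mathscr{M}(1, B_n(r) \cap X)^k \;\leq\; \mathscr{M}(1, (B_n(r) \cap X)^k). \]
Since $(B_n(r) \cap X)^k \subseteq B_{nk}(r\sqrt{k}) \cap X^k$, monotonicity of $\mathscr{M}(1,\cdot)$ bounds the right-hand side above by $\mathscr{M}(1, B_{nk}(r\sqrt{k}) \cap X^k)$. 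Taking logs, dividing by $\log(r\sqrt{k})$, and using $\log(r\sqrt{k})/\log r \to 1$ as $r \to \infty$ then gives $k\dimcm(X) \leq \dimcm(X^k)$.

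No step presents a real obstacle: the argument is essentially a transcription of Fact~\ref{fact:metric0} into the limsup defining $\dimcm$. The only mild care needed is bookkeeping on the radius — shifting between $r$ and $r\sqrt{k}$, say — but these distortions vanish in the limit because $\log(cr)/\log r \to 1$ for any fixed $c > 0$.
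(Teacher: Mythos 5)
Your proof is correct, and it is exactly the ``straightforward computation'' via Fact~\ref{fact:metric0} that the paper explicitly leaves to the reader, so there is nothing to compare beyond noting agreement. The only cosmetic point is that the product estimate in Fact~\ref{fact:metric0} is stated for subsets of the same $\R^n$, but as you implicitly use, it holds with constants depending only on the ambient dimensions, which are fixed throughout your argument.
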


\medskip \noindent Suppose that $X \subseteq \R^n$, $Y \subseteq \R^m$, $f$ is a map $X \to Y$, and $\lambda,\delta > 0$.
Then $f$ is a $(\lambda,\delta)$-quasi-isometry if
$$ \frac{1}{\lambda} \| x - x' \| - \delta \leq \| f(x) - f(x') \| \leq \lambda \| x - x' \| + \delta \quad \text{for all} \quad x,x' \in X, $$
and if for every $y \in Y$ we have $\| f(x) - y \| < \delta$ for some $x \in X$.
We say that $f$ is a quasi-isometry if it is a $(\lambda,\delta)$-quasi-isometry for some $\lambda,\delta > 0$.
It is well-known and easy to see that if there is a quasi-isometry $X \to Y$ then there is also a quasi-isometry $Y \to X$.
A map $g : X \to \R^n$ is a quasi-isometric embedding if it yields a quasi-isometry $X \to g(X)$.

\begin{Lem}\label{lem:cmqi}
Suppose $X \subseteq \R^n$, $Y \subseteq \R^m, 0 \in X, 0 \in Y$, and $f : X \to Y$ is a quasi-isometry such that $f(0) = 0$.
Then $X$ and $Y$ have the same coarse Minkowski dimension.
\end{Lem}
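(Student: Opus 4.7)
The plan is to establish $\dimcm(Y) \leq \dimcm(X)$ by a direct covering argument and to obtain the reverse inequality by symmetry. Two observations drive everything: the lower quasi-isometric bound combined with $f(0) = 0$ controls the norms of preimages, while the upper bound transports $1$-covers of $X$ to $(\lambda + 2\delta)$-covers of $Y$.

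For the forward inequality, I would first observe that if $y \in Y \cap B_m(r)$ then the quasi-density property supplies $x_y \in X$ with $\|f(x_y) - y\| < \delta$; applying the lower bound with $x' = 0$ and using $f(0) = 0$ gives $\tfrac{1}{\lambda}\|x_y\| - \delta \leq \|f(x_y)\| \leq r + \delta$, whence $\|x_y\| \leq \lambda r + 2\lambda\delta$. Thus every $y \in Y \cap B_m(r)$ lies within $\delta$ of $f(X \cap B_n(\lambda r + 2\lambda\delta))$. Next, if $\{x_1, \ldots, x_k\}$ is a minimal $1$-cover of $X \cap B_n(\lambda r + 2\lambda\delta)$, the upper bound $\|f(x_y) - f(x_i)\| \leq \lambda + \delta$ whenever $\|x_y - x_i\| \leq 1$ shows that $\{f(x_1), \ldots, f(x_k)\}$ is a $(\lambda + 2\delta)$-cover of $Y \cap B_m(r)$. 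So
$$\mathscr{M}(\lambda + 2\delta,\, Y \cap B_m(r)) \;\leq\; \mathscr{M}(1,\, X \cap B_n(\lambda r + 2\lambda\delta)),$$
and changing the scale on the left from $\lambda + 2\delta$ down to $1$ via Fact~\ref{fact:metric0} costs only a multiplicative constant depending on $\lambda$, $\delta$, and $m$. Taking logarithms, dividing by $\log r$, and using $\log(\lambda r + 2\lambda\delta)/\log r \to 1$ as $r \to \infty$ then yields $\dimcm(Y) \leq \dimcm(X)$.

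For the reverse inequality, I would produce a quasi-isometry $g : Y \to X$ with $g(0) = 0$ in the standard way: set $g(0) = 0$, which is permissible because $\|f(0) - 0\| = 0 < \delta$, and for each other $y \in Y$ pick $g(y) \in X$ with $\|f(g(y)) - y\| < \delta$. A routine verification using the defining inequalities for $f$ confirms that $g$ is itself a quasi-isometry, and the previous argument applied to $g$ yields $\dimcm(X) \leq \dimcm(Y)$. I do not expect any genuine obstacle: the one mildly delicate step is the scale change via Fact~\ref{fact:metric0}, which would introduce an $r$-dependent factor if the dimension of the ambient space were not properly accounted for, but the standard form of the fact makes the cost an absolute constant and so disappears in the $\log r$ normalization.
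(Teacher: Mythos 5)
Your proof is correct and follows essentially the same route as the paper: transport a minimal $1$-cover of a ball in $X$ through $f$ to get a $(\lambda+2\delta)$-cover of a ball in $Y$, use the lower quasi-isometry bound with $f(0)=0$ to control which ball, rescale via Fact~\ref{fact:metric0} at constant cost, and pass to the $\log r$ limit; the reverse inequality is obtained by symmetry just as in the paper (you merely make the quasi-inverse $g$ explicit, which the paper cites as well known). The only difference is a harmless reparametrization of radii: you cover $X \cap B_n(\lambda r + 2\lambda\delta)$ to handle $Y \cap B_m(r)$, whereas the paper covers $X \cap B_n(r)$ to handle $Y \cap B_m(r\lambda^{-1} - 2\delta)$.
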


\noindent 
Lemma~\ref{lem:cmqi} holds without the assumptions that $0 \in X, 0 \in Y$, and $f(0) = 0$.
We do not prove this more general result to avoid technicalities.

\begin{proof}
We show that $\dimcm(Y) \leq \dimcm(X)$.
As there is a quasi-isometry $Y \to X$ that also maps $0$ to $0$ the same argument yields the other inequality.
Fix $\lambda,\delta > 0$ such that $f$ is a $(\lambda,\delta)$-quasi-isometry.

Fix $r > 0$.
Let $X(r) = B_n(0,r) \cap X$ and $Y(r) = B_m(0,r) \cap Y$.
Let $\{ B_n(p_i,1) \}_{i = 1}^{k}$ be a minimal covering of $X(r)$ by balls with radius $1$.
Then $\{ f ( B_n(p_i,1) ) \}_{i = 1}^{k}$ covers $f(X(r))$.
Let $q_i = f(p_i)$ for all $i$.
As $f$ is a $(\lambda,\delta)$-quasi-isometry we see that $f(B_n(p_i,1))$ is contained in $B_m(q_i, \lambda + \delta)$ for all $i$.
So $\{ B_m(q_i, \lambda + \delta)\}_{i = 1}^{k}$ covers $f(X(r))$.

We now show that every point in $Y(r
\lambda^{-1} - 2\delta )$ lies within distance $\delta$ of $f(X(r))$.
Fix $y \in Y(r\lambda^{-1} - 2\delta)$.
As $f$ is a $(\lambda,\delta)$-quasi-isometry there is $x \in X$ such that $\| f(x) - y \| < \delta$.
Suppose $\| x \| > r$.
Then as $f(0) = 0$ we have
$$ \|f(x)\| \geq \frac{1}{\lambda}\|x\| - \delta > r\lambda^{-1} - \delta.  $$
As $\| f(x) - y \| < \delta$ the triangle inequality yields $\|y\| > r \lambda^{-1} - 2\delta$.
Contradiction.

Combining the previous paragraphs we see that $\{ B_m( q_i, \lambda +2\delta) \}_{i = 1}^{k}$ covers $Y(r\lambda^{-1} - 2\delta)$.
Thus
$$ \mathscr{M}(\lambda + 2\delta, Y(r\lambda^{-1} - 2\delta)) \leq \mathscr{M}(1,X(r)) \quad \text{for all} \quad r > 0. $$
Applying the first claim of Fact~\ref{fact:metric0} we obtain a constant $L > 0$ depending only on $m$ such that
$$ L \mathscr{M}(1, Y(r\lambda^{-1} - 2\delta) ) \leq \mathscr{M}(\lambda + 2\delta, Y(r\lambda^{
-1} - 2\delta)) $$
hence
$$ L \mathscr{M}(1, Y(r\lambda^{-1} - 2\delta) ) \leq \mathscr{M}(1,X(r)). $$
Taking logarithms of of both sides of the expression above, dividing both sides by $\log(r)$, and taking the limit as $r \to \infty$ we see that $\dimcm(Y) \leq \dimcm(X)$.
\end{proof}

\section{Proof of Theorem~\ref{thm:wedge}}
\noindent 
Let $\mathbb{S}$ be the unit circle in $\R^2$.
Given $u \in \mathbb{S}$ we let $T_u : \R^2 \to \R$ be the orthogonal projection parallel to $u$, i.e., $T_u$ is the orthogonal projection such that $T_u( x ) = T_u( y )$ if and only if $x - y = tu$ for some $t \in \R$.
For our purposes a \textbf{double wedge} around $u \in \mathbb{S}$ is a subset of $\R^2$ of the form 
$$ C^{u}_{s,\varepsilon} := \{ tv : t \in \mathbb{R},|t| > s, v \in \mathbb{S}, \|v - u\| < \varepsilon \} $$
for some $s,\varepsilon > 0$.

\begin{Lem}\label{lem:wedge}
Let $F$ be a nonempty subset of $\R^2$ and $u \in \mathbb{S}$.
If $F - F = \{ x - y : x,y \in F \}$ is disjoint from some double wedge around $u$ then the restriction of $T_u$ to $F$ is a quasi-isometric embedding $F \to \R$.
\end{Lem}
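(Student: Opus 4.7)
The plan is to reduce the lemma to a direct trigonometric computation relating $|T_u(x) - T_u(y)|$ to $\|x - y\|$ and the acute angle $\theta$ between $x - y$ and the line $\R u$.

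First I would recall that since $T_u$ is the orthogonal projection whose kernel is $\R u$, for every $w \in \R^2$ one has $|T_u(w)| = \|w\|\sin\theta$, where $\theta \in [0, \pi/2]$ is the angle between $w$ and $\R u$. In particular $T_u$ is $1$-Lipschitz, which already gives the upper inequality in the quasi-isometric estimate.

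Next I would unpack the hypothesis. Writing $w = x - y$ for $x, y \in F$ with $w \neq 0$ and $v := w/\|w\| \in \mathbb{S}$, one has $w = \|w\|v = (-\|w\|)(-v)$, so $w \in C^u_{s,\varepsilon}$ iff $\|w\| > s$ and at least one of $\|v - u\| < \varepsilon$, $\|v + u\| < \varepsilon$ holds. Disjointness of $F - F$ from $C^u_{s,\varepsilon}$ therefore forces: whenever $\|x - y\| > s$, both $\|v - u\| \geq \varepsilon$ and $\|v + u\| \geq \varepsilon$. Expanding $\|v \pm u\|^2 = 2 \pm 2(v \cdot u)$ combines these into $|v \cdot u| \leq 1 - \varepsilon^2/2$, and hence $\sin\theta = \sqrt{1 - (v \cdot u)^2} \geq c$ for an explicit constant $c = c(\varepsilon) > 0$.

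The third step is to conclude by stitching together the two cases. Combining the previous two paragraphs yields $c\|x-y\| \leq |T_u(x) - T_u(y)| \leq \|x - y\|$ whenever $\|x - y\| > s$, while for $\|x - y\| \leq s$ the lower bound is trivial. Taking $\lambda := \max(1, 1/c)$ and $\delta := s$ then produces the required quasi-isometric inequalities $\lambda^{-1}\|x-y\| - \delta \leq |T_u(x) - T_u(y)| \leq \lambda \|x-y\| + \delta$ for all $x, y \in F$, and the coboundedness condition onto the image $T_u(F)$ is automatic.

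I do not expect any real obstacle; the whole argument is a trigonometric computation. The one point to be attentive to is the \emph{double} structure of the wedge, which forces both $\|v - u\| \geq \varepsilon$ and $\|v + u\| \geq \varepsilon$ so that the resulting bound $|v \cdot u| \leq 1 - \varepsilon^2/2$ is symmetric in sign and thereby genuinely keeps $\sin\theta$ away from zero; were only one of the two inequalities available, $v \cdot u$ could approach $-1$ and $\sin\theta$ could collapse to $0$.
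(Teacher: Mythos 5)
Your proof is correct and follows essentially the same route as the paper's: the $1$-Lipschitz bound gives the upper estimate, the double-wedge hypothesis bounds the direction of $x-y$ away from $\pm u$ when $\|x-y\|>s$ (yielding $|T_u(x)-T_u(y)|\geq c\|x-y\|$), and pairs with $\|x-y\|\leq s$ are absorbed into the additive constant $\delta=s$. The only difference is cosmetic: the paper rotates coordinates so that $u=(0,1)$ and expresses the angular condition as a slope bound $|y-y'|\leq\lambda|x-x'|$, whereas you express it through $|v\cdot u|\leq 1-\varepsilon^2/2$ and $\sin\theta$.
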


\noindent
Lemma~\ref{lem:wedge} is a quasi-isometric version of a well-known fact from geometric measure theory: if $F$ is a nonempty subset of $\R^2$ such that $F - F$ is disjoint from a double wedge of the form $C^u_{\varepsilon,0}$ then the restriction of $T_u$ to $F$ is a bilipschitz embedding $F \to \R$.
This fact is applied in \cite{FHM, HM}.

\begin{proof}
Suppose that $F - F$ is disjoint from $C^{u}_{s,\varepsilon}$.
As $T_u$ is an othogonal projection we have $\| T_u(x) - T_u(x') \| \leq \| x - x' \|$ for all $x,x' \in \R^2$, so it suffices to obtain a lower bound on $\| T_u(X) - T_u(x')\|$ of the appropriate form.

After making a change of coordinates if necessary we suppose $u = (0,1)$ so that $T_u(x,y) = x$ for all $(x,y) \in \R^2$.
Then we have 
$$ C^{u}_{s,\varepsilon} = \{ (x,y) \in \R^2 : |y| > \lambda |x| \quad \text{and} \quad \|(x,y)\| > s \} $$
for some $\lambda > 0$ depending only on $\varepsilon$.
Thus, if $(x,y) \in F - F$ then either $\|(x,y)\| < s$ or $|y| \leq \lambda |x|$.
Equivalently, for all $(x,y), (x',y') \in F$ we either have 
$$ \| (x,y) - (x',y') \| < s \quad 
\text{or} \quad | y - y'| \leq \lambda | x - x' |. $$
In the latter case we have
$$ \| (x,y) - (x',y') \| \leq |x - x'| + |y - y'| \leq (1 + \lambda)| x - x'|  $$
hence
$$ \frac{1}{1 + \lambda}\|(x,y) - (x',y') \| \leq |x - x'|. $$
In the first case we have
$$ \| (x,y) - (x',y') \| - s < |x - x'|. $$
So for all $(x,y),(x',x') \in F$ we have
$$ \frac{1}{1 + \lambda} \| (x,x') - (y,y') \| - s \leq |x - x'|. $$
So the restriction of $T_u$ to $F$ is a quasi-isometric embedding $F \to \R^2$.
\end{proof}

\noindent We let $\mathbb{H}$ be the upper half plane $\{ (x,y) \in \R^2 : y > 0 \}$ and let $\mathbb{S}^+ = \mathbb{S} \cap \mathbb{H}$.
A wedge in $\mathbb{H}$ around $u \in \mathbb{S}^+$ is a set of the form
$$ C^{u,+}_{s,\varepsilon} :=  \{ tv : t \in \mathbb{R}, t > s, v \in \mathbb{S}, \|v - u\| < \varepsilon \} $$
such that $C^{u,+}_{s,\varepsilon} \subseteq \mathbb{H}$.

\begin{Lem}\label{lem:wedge1}
Suppose $F \subseteq \mathbb{H}$ intersects every wedge in $\mathbb{H}$.
Then there is a $u \in \mathbb{S}^+$ such that $T_u(F)$ is dense.
\end{Lem}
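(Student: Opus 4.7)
The plan is to apply the Baire Category Theorem on $\mathbb{S}^+$, which is a Polish space (homeomorphic to $(0,\pi)$ via $\theta \mapsto u_\theta := (\cos\theta,\sin\theta)$). Let $B = \{u \in \mathbb{S}^+ : T_u(F) \text{ is dense in } \R\}$ and, for each pair of rationals $p<q$, set
\[ A_{p,q} := \{u \in \mathbb{S}^+ : T_u(F) \cap (p,q) = \emptyset\}, \]
so that $\mathbb{S}^+ \setminus B = \bigcup_{p<q\in\mathbb{Q}} A_{p,q}$. It suffices to show each $A_{p,q}$ is closed with empty interior; then Baire gives that $B$ is comeager in $\mathbb{S}^+$ and in particular nonempty. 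Closedness is routine: by continuity of $u \mapsto T_u(f)$ for fixed $f$, if $u_n \to u$ and $T_u(f) \in (p,q)$ for some $f \in F$, then $T_{u_n}(f) \in (p,q)$ eventually, contradicting $u_n \in A_{p,q}$.

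The crux is showing each $A_{p,q}$ has empty interior. The key computation is that in polar form, writing $f = \|f\|(\cos\phi_f,\sin\phi_f)$, one has (up to a sign that does not matter for density) $T_{u_\theta}(f) = \|f\|\sin(\phi_f - \theta)$. Hence a single very distant point $f$ whose polar angle $\phi_f$ is close to a target angle $\theta_0$, can under tiny perturbations of $\theta$ near $\phi_f$ produce projections sweeping through a large range of $\R$: hitting a specific $r \in \R$ requires only $|\theta - \phi_f| \approx |r|/\|f\|$. Given a nonempty open $U \subseteq \mathbb{S}^+$, I would pick $\theta_0$ and $\eta > 0$ with $\{u_\theta : |\theta - \theta_0| < \eta\} \subseteq U$, then choose $\varepsilon > 0$ small enough that $C^{u_{\theta_0},+}_{s,\varepsilon} \subseteq \mathbb{H}$ for all $s > 0$ and its angular aperture is below $\eta/2$, and finally choose $s$ so large that $\max(|p|,|q|)/s < \eta/2$. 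By hypothesis there is $f \in F \cap C^{u_{\theta_0},+}_{s,\varepsilon}$, and this $f$ satisfies $|\phi_f - \theta_0| < \eta/2$ and $\|f\| > s$. For any $r \in (p,q)$ the equation $\sin(\phi_f - \theta) = r/\|f\|$ then has a solution $\theta$ with $|\theta - \phi_f| < \eta/2$ (since $|r|/\|f\| < 1$), so $u_\theta \in U$ and $T_{u_\theta}(f) = r \in (p,q)$, giving $u_\theta \in U \setminus A_{p,q}$.

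The main obstacle is essentially bookkeeping: one must juggle the wedge aperture $\varepsilon$, the distance cutoff $s$, the angular radius $\eta$ of $U$ around $\theta_0$, and the target interval $(p,q)$ into the right quantitative relationship so that the $\theta$ that solves the sine equation still lies inside $U$. The geometric content behind this is that applying $T_u$ to an extremely distant point of $F$ amplifies tiny wobbles in $u$ into arbitrarily large changes in the projection, which is precisely why the assumption that $F$ meets every wedge in $\mathbb{H}$ (i.e., that it has points of arbitrarily large norm in every direction of $\mathbb{S}^+$) forces $T_u(F)$ to be dense for a comeager set of $u$.
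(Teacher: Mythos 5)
Your proposal is correct and takes essentially the same route as the paper: a Baire-category argument on $\mathbb{S}^+$, showing for each rational interval that the set of directions whose projection meets it is open and dense, with density obtained by projecting a single distant point of $F$ taken from a thin wedge around the target direction (the paper carries out this last step by synthetic angle-chasing with the rays from $q_1,q_2$, you by the polar formula $T_{u_\theta}(f)=\pm\|f\|\sin(\phi_f-\theta)$). The only repair needed is a constant: to get $|\theta-\phi_f|=\arcsin(|r|/\|f\|)<\eta/2$ you should choose $s$ with $\max(|p|,|q|)/s<\sin(\eta/2)$ rather than $<\eta/2$, since $|r|/\|f\|<1$ alone only yields some solution of the sine equation, not one within $\eta/2$ of $\phi_f$.
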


\noindent The reader may find that drawing a few pictures greatly assists in comprehending the proof of Lemma~\ref{lem:wedge1}.
We let $p = (-1,0)$ and $o = (0,0)$.
Note that if $z \in \mathbb{H}$, $q$ is a positive real number, and $u \in \mathbb{S}^+$, then $T_u(z) = q$ if and only if $\angle pou = \angle pqz$.

\begin{proof}
We show that the set of $u \in \mathbb{S}^+$ such that $T_u(F)$ is dense in $\mathbb{R}$ is comeager in $\mathbb{S}^+$.
It suffices to show that
$$ \{ u \in \mathbb{S}^+ : T_u(F) \cap I \neq \emptyset \} $$
is open and dense in $\mathbb{S}^+$ for every nonempty open interval $I$ with rational endpoints.
Fix a nonempty open interval $I = (q_1,q_2)$ with rational endpoints.
We suppose that $q_1,q_2 > 0$ for the sake of simplicity, the more general case follows by trivial modifications of our argument.
The map $T : \mathbb{S}^+ \times \mathbb{R}^2 \to \mathbb{R}$ given by $T(u,x) = T_u(x)$ is continuous.
Thus if $T_u(x) \in I$ then $T_v(x) \in I$ for all $v \in \mathbb{S}^+$ sufficiently close to $u$.
It follows that the set of $u$ such that $T_u(F) \cap I \neq \emptyset$ is open in $\mathbb{S}^+$.

It now suffices to show that the set of $w \in \mathbb{S}^+$ such that $T_w(F) \cap I \neq \emptyset$ is dense in $\mathbb{S}^+$.
Fix $u,v \in \mathbb{S}^+$ such that $\angle pou < \angle pov$ and let $J$ be the set of $w \in \mathbb{S}^+$ such that $ \angle pou < \angle pow < \angle pov $.
We show there is a $w \in J$ such that $T_w(F) \cap I \neq \emptyset$.
Let $r_1,r_2 \in \mathbb{H}$ be such that $\angle pq_1r_1 = \angle pou$ and $\angle pq_2r_2 = \angle pov$.
Let $D$ be the set of points in $\mathbb{H}$ that lie in between the rays $\overrightarrow{q_1r_1}$ and $\overrightarrow{q_2r_2}$.
It is easy to see that
$$ D = \bigcup_{q \in I} \{ r \in \mathbb{H} : \angle pou < \angle pqr < \angle pov \} = \bigcup_{ q \in I } \bigcup_{ w \in J } T_w^{-1}( \{ q \} ) = \bigcup_{w \in J} T_w^{-1}(I). $$
It therefore suffices to show that $D$ intersects $F$.
Let $z_1,z_2 \in \mathbb{S}^+$ be such that 
$$ \angle pou < \angle poz_1 < \angle poz_2 < \angle pov.  $$
As $\angle pq_1r_1 < \angle poz_1 < \angle poz_2 < \angle pq_2r_2$, we see that every element of $\overrightarrow{oz_1}$ or $\overrightarrow{oz_2}$ sufficiently far from the origin lies in $D$.
It follows that there is a $t > 0$ such that
$$ W := \{ z \in \mathbb{H} : \|z\| \geq t, \angle poz_1 < \angle poz < \angle poz_2 \} \subseteq D. $$
Then $W$ is a wedge in $\mathbb{H}$ and so contains an element of $F$.
Thus $D$ contains an element of $F$.
\end{proof}

\begin{Lem}\label{lem:wedge3}
Suppose $E \subseteq \R$.
Then one of the following holds:
\begin{enumerate}
\item there is  a $u \in \mathbb{S}$ such that the restriction of $T_u$ to $E^2$ is a quasi-isometric embedding $E^2 \to \R$,
\item there is a linear $S : \R^4 \to \R$ such that $S(E^4)$ is dense.
\end{enumerate}
\end{Lem}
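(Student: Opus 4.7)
The plan is to apply the dichotomy already suggested by Lemma~\ref{lem:wedge}: either $E^2 - E^2$ misses a double wedge or it meets every double wedge in $\R^2$. The key observation is that $E^4$ naturally parameterizes differences of elements of $E^2$, so a linear map $S : \R^4 \to \R$ arising as $T_u$ applied to such a difference has image exactly $T_u(E^2 - E^2)$. So these two cases correspond precisely to the two alternatives in the lemma, via Lemma~\ref{lem:wedge} and Lemma~\ref{lem:wedge1} respectively.

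First, suppose $E^2 - E^2$ is disjoint from some double wedge around some $u \in \mathbb{S}$. Then Lemma~\ref{lem:wedge} applied to $F = E^2$ directly gives that $T_u|_{E^2}$ is a quasi-isometric embedding, yielding alternative (1).

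Otherwise, $E^2 - E^2$ intersects every double wedge. The plan here is first to transfer this to the upper half plane: since $E^2 - E^2$ is centrally symmetric (if $x \in E^2 - E^2$, then so is $-x$), for any wedge $W \subseteq \mathbb{H}$ around $u \in \mathbb{S}^+$, the set $W \cup (-W)$ is a double wedge, which by hypothesis meets $E^2 - E^2$; applying the symmetry, $W$ itself meets $E^2 - E^2$. Hence $(E^2 - E^2) \cap \mathbb{H}$ intersects every wedge in $\mathbb{H}$, and Lemma~\ref{lem:wedge1} produces a $u \in \mathbb{S}^+$ for which $T_u\bigl( (E^2 - E^2) \cap \mathbb{H} \bigr)$ is dense in $\R$, and a fortiori $T_u(E^2 - E^2)$ is dense in $\R$.

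It remains to package this as a linear map on $\R^4$. Define $S : \R^4 \to \R$ by
$$ S(x_1,x_2,x_3,x_4) := T_u\bigl( (x_1,x_2) - (x_3,x_4) \bigr). $$
Since $T_u$ is linear, so is $S$, and $S(E^4) = T_u(E^2 - E^2)$ is dense, giving alternative (2). The only substantive step is the symmetry argument used to pass from double wedges in $\R^2$ to wedges in $\mathbb{H}$; everything else is a direct invocation of the two wedge lemmas already established.
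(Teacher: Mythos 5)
Your proof is correct and follows essentially the same route as the paper's: the same dichotomy on whether $E^2 - E^2$ misses a double wedge, invoking Lemma~\ref{lem:wedge} in the first case and the central-symmetry argument plus Lemma~\ref{lem:wedge1} in the second, then packaging $T_u$ composed with the difference map as the linear $S$ on $\R^4$. If anything, you are slightly more careful than the paper in spelling out why every wedge in $\mathbb{H}$ is met and in applying Lemma~\ref{lem:wedge1} to $(E^2 - E^2) \cap \mathbb{H}$ so that its hypothesis $F \subseteq \mathbb{H}$ is literally satisfied.
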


\begin{proof}
Consider $E^2 - E^2 \subseteq \R^2$.
If $E^2 - E^2$ is disjoint from a double wedge in $\R^2$ then Lemma~\ref{lem:wedge1} shows that some $T_u$ quasi-isometrically embeds $E^2$ into $\R$.

Suppose $E^2 - E^2$ intersects every double wedge in $\R^2$.
Note that if $(x,y) \in E^2 - E^2$ then $(-x,-y)$ is also an element of $E^2 - 
E^2$.
It is easy to see that this implies that $E^2 - E^2$ intersects every wedge in $\mathbb{H}$.
Applying Lemma~\ref{lem:wedge3} we fix a $u \in \mathbb{S}$ such that $T_u(E^2 - E^2)$ is dense.
Let $S : \R^4 \to \R$ be the linear function given by
$$ S(x,y,x',y') = T_u(x - x', y-y') \quad \text{for all} \quad x,y,x',y' \in \R. $$
Then $S(E^4)$ is dense.
\end{proof}


\noindent We now prove Theorem~\ref{thm:wedge}.

\begin{proof}
Suppose towards a contradiction that $E \subseteq \R$ has positive coarse Minkowski dimension and $T(E^n)$ is not dense for every $n \in \N$ and linear $T : \R^n \to \R$.
We may suppose that $0 \in E$.
Let $\mathcal{S}$ be the collection of sets of the form $T(E^n)$ for linear $T : \R^n \to \R$.
It is easy to see that if $F \in \mathcal{S}$ and $T: \R^n \to \R$ is linear then $T(F^n)$ is also in $\mathcal{S}$.
We let $s$ be the supremum of the coarse Minkowski dimensions of members of $\mathcal{S}$.
Every element of $\mathcal{S}$ has coarse Minkowski dimension $\leq 1$, so $s$ exists and $s \leq 1$.
As $\dimcm(E) > 0$ we have $s > 0$.
Let $F \in \mathcal{S}$ be such that $\dimcm(F) > \frac{1}{2}s$.
An application of Lemma~\ref{lem:wedge3} yields a linear $T : \R^2 \to \R$ such that the restriction of $T$ to $F^2$ is a quasi-isometric embedding $F^2 \to \R$.
Lemma~\ref{lem:cmqi} and Fact~\ref{fact:metric1} together show that
$$ \dimcm T(F^2) = \dimcm(F^2) = 2\dimcm(F) > s. $$
But $T(F^2) \in \mathcal{S}$, contradiction.
\end{proof}


\section{A corollary in $\R^n$}
\noindent We prove a higher dimensional version of the second claim of Theorem~\ref{thm:main}.
(Recall that a closed subset of $\R^n$ has topological dimension zero if and only if it is nowhere dense.)

\begin{Thm}
\label{thm:special}
Suppose $Z$ is a closed subset of $\R^n$ with topological dimension zero.
If $Z$ has positive coarse Minkowski dimension then $(\rvec,Z)$ defines all bounded Borel subsets of all $\R^n$.
\end{Thm}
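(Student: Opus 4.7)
The plan is to reduce Theorem~\ref{thm:special} to Theorem~\ref{thm:main} by producing a closed nowhere dense subset $E\subseteq\R$ of positive coarse Minkowski dimension that is definable in $(\rvec,Z)$. For such an $E$ one has $\bd E = E$ and $\dimcm(\bd E)>0$, so Theorem~\ref{thm:main} applied to $E$ gives that $(\rvec,E)$ defines every bounded Borel subset of every $\R^m$; since $E$ is $(\rvec,Z)$-definable, the same then holds for $(\rvec,Z)$.

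The first step is to transfer positivity of $\dimcm$ to a one-dimensional coordinate projection. A lattice box in $\R^n$ meeting $Z$ projects to lattice boxes in $\R$ meeting each $\pi_i(Z)$, so
\[
\mathscr{N}(B_n(r)\cap Z)\le \prod_{i=1}^n \mathscr{N}(B_1(r)\cap\pi_i(Z)),
\]
and $\dimcm(Z)>0$ forces $\dimcm(\pi_i(Z))>0$ for some $i$. After relabeling, assume $Y := \pi_1(Z)$ has $\dimcm(Y)>0$. If the closure $\overline{Y}\subseteq\R$ has empty interior, then $E := \overline{Y}$ works: the closure of a definable subset of $\R$ is first-order definable in any expansion of $\rvec$, so $E$ is definable in $(\rvec,Z)$; it is closed, nowhere dense, and has $\dimcm(E)\geq\dimcm(Y)>0$, and Theorem~\ref{thm:main} finishes the argument.

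The delicate case is when $\overline{Y}$ has nonempty interior. Here I plan to replace the coordinate projection by a linear functional of the form $T_\gamma(x) := x_1 + \gamma_2 x_2 + \cdots + \gamma_n x_n$ and set $E := T_\gamma(Z)$. I would choose $\gamma\in\R^{n-1}$ so that $T_\gamma|_Z$ is a quasi-isometric embedding of $Z$ into $\R$; quasi-isometric embeddings are proper, so $T_\gamma(Z)$ is closed, and Lemma~\ref{lem:cmqi} gives $\dimcm(T_\gamma(Z))=\dimcm(Z)>0$. Existence of such $\gamma$ would come from an extension of the wedge dichotomy of Lemmas~\ref{lem:wedge}--\ref{lem:wedge3} to $\R^n$: either $Z-Z$ avoids a double wedge around some unit vector $u\in\R^n$, supplying a quasi-isometric projection normal to $u$, or $Z-Z$ meets every such double wedge and the Cartesian supremum argument of the proof of Theorem~\ref{thm:wedge} leads to a contradiction with maximality of $\dimcm$ in the family of linear images.

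The hardest step is to verify that $E = T_\gamma(Z)$ has empty interior, and here the topological zero-dimensionality hypothesis on $Z$ enters essentially. Each fiber $T_\gamma^{-1}(t)\cap Z$ has diameter at most the quasi-isometry constant, and is therefore a compact totally disconnected set; choosing $\gamma$ in a comeager subset of $\R^{n-1}$ should ensure that no positive-measure family of fibers aligns to produce an interval in the image. The principal obstacle is making this genericity argument precise, since topological zero-dimensionality of $Z$ alone does not prevent linear images of $Z$ from being dense in intervals; the argument must combine total disconnectedness of $Z$ with positivity of $\dimcm$ and the quasi-isometric structure of $T_\gamma$ to rule out interior in $T_\gamma(Z)$ for a generic choice of $\gamma$.
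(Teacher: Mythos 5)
Your first case (some coordinate projection $\pi_i(Z)$ has positive coarse Minkowski dimension and nowhere dense closure) is fine and matches the paper's strategy: take the closure, note it equals its own boundary, and quote Theorem~\ref{thm:main}; the product bound $\mathscr{N}(B_n(r)\cap Z)\le \prod_i \mathscr{N}(B_1(r)\cap\pi_i(Z))$ is also exactly how the paper (via Fact~\ref{fact:metric1} and $Z\subseteq\pi_1(Z)\times\cdots\times\pi_n(Z)$) transfers positivity of $\dimcm$ to a projection. The genuine gap is your ``delicate case,'' where the relevant projection is somewhere dense, and the gap is not just a missing verification: the mechanism you propose cannot work as stated. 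Topological zero-dimensionality of $Z$ gives no control over linear images: the graph of a continuous surjection from a Cantor set onto $[0,1]$ is a compact zero-dimensional set whose second coordinate projection is an entire interval, and a quasi-isometric embedding gives no information at small scales, so even when $T_\gamma|_Z$ is a quasi-isometric embedding the closed image $T_\gamma(Z)$ may perfectly well contain an interval. Your fallback, the other branch of an $\R^n$ wedge dichotomy, only produces a dense set of the form $S(Z^{2k})$, and density of such a linear image does not by itself feed into Theorem~\ref{thm:FM}, which needs a one-variable definable set that is not dense and co-dense in any interval. So the proposal has no route to a conclusion precisely in the case you flag as unresolved.

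The paper closes this case with an external input that your argument lacks: it argues contrapositively, assuming $(\rvec,Z)$ does not define all bounded Borel sets, and invokes \cite[Theorem D, Theorem E]{FHW-Compact} to conclude that every coordinate projection $\pi_k(Z)$ is then nowhere dense. In other words, the ``somewhere dense projection'' situation is handled not by geometry of $Z$ but by a definability theorem saying that a closed zero-dimensional definable set with a somewhere dense projection already forces the expansion to define all bounded Borel sets. Once all $\pi_k(Z)$ are nowhere dense, Theorem~\ref{thm:main} gives $\dimcm\pi_k(Z)=0$ for each $k$, and Fact~\ref{fact:metric1} yields $\dimcm(Z)=0$, the desired contrapositive. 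To repair your write-up you should replace the genericity/quasi-isometry sketch in the delicate case by an appeal to such a result (or prove an analogue of it), since zero-dimensionality plus positive coarse dimension alone does not rule out interior in linear images of $Z$.
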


\begin{proof}
We suppose that $(\rvec,Z)$ does not define all bounded Borel subsets of all $\R^n$ and show that $\dimcm(Z) = 0$.
Given $1 \leq k \leq n$ we let $\pi_k : \R^n \to \R$ be given by
$$ \pi_k(x_1,\ldots,x_n) = x_k \quad \text{for all} \quad (x_1,\ldots,x_n) \in \R^n. $$
An application of \cite[Theorem D, Theorem E]{FHW-Compact} shows that $\pi_k(Z)$ is nowhere dense for all $1 \leq k \leq n$.
Theorem~\ref{thm:main} shows that $\dimcm \pi_k(Z) = 0$ for all $1 \leq k \leq n$.
As $Z$ is a subset of $\pi_1(Z) \times \ldots \times \pi_n(Z)$
repeated application of Fact~\ref{fact:metric1} shows that $\dimcm(Z) = 0$.
\end{proof}

\bibliographystyle{amsplain}
\bibliography{Ref}

\end{document}